\documentclass[a4paper,USenglish,cleveref,thm-restate]{lipics-v2021}
\hideLIPIcs
\nolinenumbers
\pdfoutput=1
\usepackage[T1]{fontenc}
\usepackage{amsmath,amssymb}
\usepackage{graphicx}
\usepackage{needspace}
\usepackage{subcaption}
\crefname{conjecture}{Conjecture}{Conjectures}
\Crefname{conjecture}{Conjecture}{Conjectures}
\title{How to Construct High Barrycades}

\usepackage{todonotes}
\usepackage{tikz}
\usetikzlibrary{fit,backgrounds,patterns}
\definecolor{palettecolor2}{rgb}{1.0,0.0,0.0}
\definecolor{palettecolor3}{rgb}{0.0,1.0,0.0}
\definecolor{palettecolor4}{rgb}{0.0,0.0,1.0}
\definecolor{palettecolor5}{rgb}{1.0,1.0,0.0}
\definecolor{palettecolor6}{rgb}{0.0,1.0,1.0}
\definecolor{palettecolor7}{rgb}{1.0,0.0,1.0}
\definecolor{palettecolor8}{rgb}{0.6666666666666666,0.0,0.0}
\definecolor{palettecolor9}{rgb}{0.0,0.6666666666666666,0.0}
\definecolor{palettecolor10}{rgb}{0.0,0.0,0.6666666666666666}
\definecolor{palettecolor11}{rgb}{1.0,0.6666666666666666,0.0}
\definecolor{palettecolor12}{rgb}{0.6666666666666666,0.0,1.0}
\definecolor{palettecolor13}{rgb}{1.0,0.6666666666666666,0.0}
\definecolor{palettecolor14}{rgb}{0.0,1.0,0.6666666666666666}
\definecolor{palettecolor15}{rgb}{1.0,0.0,0.6666666666666666}
\definecolor{palettecolor16}{rgb}{0.0,0.6666666666666666,1.0}
\definecolor{palettecolor17}{rgb}{0.6666666666666666,1.0,0.0}
\definecolor{palettecolor18}{rgb}{0.6666666666666666,0.0,0.6666666666666666}
\definecolor{palettecolor19}{rgb}{0.0,0.6666666666666666,0.6666666666666666}
\definecolor{palettecolor20}{rgb}{0.6666666666666666,0.6666666666666666,0.0}

\author{Jakub Binięda}{Institute of Theoretical Computer Science, Faculty of Mathematics and Computer Science, Jagiellonian University, Kraków, Poland}{jakub.binieda@student.uj.edu.pl}{https://orcid.org/0009-0004-5225-5133}{}
\author{Michał Dębski}{Warsaw University of Technology, Warszawa, Poland}{michal.debski@pw.edu.pl}{https://orcid.org/0000-0001-9606-6052}{}
\author{Grzegorz Gutowski}{Institute of Theoretical Computer Science, Faculty of Mathematics and Computer Science, Jagiellonian University, Krak{\'o}w, Poland \and \url{https://grzegorz.gutowscy.pl}}{grzegorz.gutowski@uj.edu.pl}{https://orcid.org/0000-0003-3313-1237}{Partially supported by grant no.~2023/49/B/ST6/01738 from National Science Centre, Poland.}
\author{Mateusz Milewski}{Institute of Theoretical Computer Science, Faculty of Mathematics and Computer Science, Jagiellonian University, Kraków, Poland}{mateusz.milewski@student.uj.edu.pl}{https://orcid.org/0009-0002-6068-9383}{}
\authorrunning{J. Binięda, M. Dębski, G. Gutowski, and M. Milewski}
\Copyright{Jakub Binięda, Michał Dębski, Grzegorz Gutowski, and Mateusz Milewski}
\ccsdesc[500]{Mathematics of computing~Combinatorial algorithms}
\keywords{barrycade, corral, permutations, distinct partial sums, simulated annealing}

\acknowledgements{
We want to thank Jarosław Grytczuk and Bartłomiej Pawlik for introducing us to the problem.
We want to thank Piotr Białek for suggesting the simulated annealing heuristic and for insightful comments.
}

\newcommand{\floor}[1]{\left\lfloor #1 \right\rfloor}
\newcommand{\ceil}[1]{\left\lceil #1 \right\rceil}

\newcommand{\set}[1]{\left\{ #1 \right\}}
\newcommand{\brac}[1]{\left( #1 \right)}

\let\leq\leqslant

\let\le\leqslant
\let\ge\geqslant

\let\rho\varrho

\definecolor{dark blue}{rgb}{0.121,0.47,0.705}
\let\emph\relax\DeclareTextFontCommand{\emph}{\color{dark blue}\em}

\begin{document}
\maketitle

\begin{abstract}
Given two positive integers \emph{height} $h$ and \emph{order} $n$, 
the barrycade construction problem asks for a set of $h$ permutations of the integers from $1$ to $n$ such that all the proper partial sums given by these permutations are pairwise distinct.
The name barrycade was coined by Richard K.\ Guy and refers to Barry Cipra, who introduced this kind of arrangement problem.
A simple calculation shows that a solution can only exist for $n \ge 2h-2$ and it is conjectured that there always exists a solution for every height~$h \ge 2$ and order~$n \ge 2h-2$.

In this work,
 for every height~$h \ge 1$,
 we present a construction of a barrycade of height~$h$ and order~$n = 2h+3$.
We also present a randomized heuristic that allows us to find a barrycade of height~$h$ and conjectured optimal order~$n=2h-2$, for every height $2 \leq h \leq 50$.
Thus, we confirm the conjectured optimal order for all heights up to $50$.

We also consider a related corral construction problem, where the permutations define a cyclic arrangement.
In this setting,
 for every height~$h \ge 1$,
 we present a construction of a corral of height~$h$ and order~$n=2h$.
A heuristic approach, similar to the one used for barrycades, allows us to find a corral of height~$h$ and conjectured optimal order~$n=2h-1$, for every height $1 \leq h \leq 50$.
We confirm Tomoki Nakamigawa's conjecture on well-dispersed partitions of cyclic groups for the number of parts up to $20$.
\end{abstract}

\section{Introduction}\label{sec:intro}

Let $n$ and $h$ be two positive integers, and let $w=\frac{n(n+1)}{2} = 1 + 2 + \cdots + n$.
Given a set of $h$ permutations $\rho_1,\rho_2,\ldots,\rho_h$ of the set $\set{1,2,\ldots,n}$, for every $1\le i \le h$ and $1 \le j \le n-1$, let $\sigma_{i,j}$ be the $j$-th \emph{proper partial sum} of the $i$-th permutation given by $\sigma_{i,j} = \sum_{k=1}^{j} \rho_i(k)$.
If all the proper partial sums $\sigma_{i,j}$ for every $1 \le i \le h$ and $1 \le j \le n-1$ are pairwise distinct, we say that such a set of permutations forms a \emph{breakfree barrycade} of \emph{order}~$n$, \emph{height}~$h$, and \emph{width}~$w$.
In this paper, we are interested in minimizing the order of a breakfree barrycade of a given height~$h$.
As every proper partial sum satisfies $1 \le \sigma_{i,j} \le w - 1$, for a breakfree barrycade of height~$h$ and order~$n$ we have $h(n-1) \le \frac{n(n+1)}{2}-1$, which for $h\ge 2$ and $n \ge 2$ is equivalent to $n \ge 2h-2$.
It is conjectured that this trivial bound is optimal and that for every $h \ge 2$ and $n \ge 2h-2$ there exists a breakfree barrycade of height~$h$ and order~$n$.

Barry Cipra was the first to ask for such constructions, and the problem was later mentioned by Stan Wagon in his \textit{Problem of the Week}~\cite{Wagon2011} and popularized by Richard K. Guy in his famous presentation \textit{Things I’d Like to Know}~\cite{Guy2016} and in a later paper~\cite{Guy2020}.
Some constructions by Rob Pratt of optimal barrycades of orders up to $26$, and of optimal corrals of orders $26$, $30$, $34$, and $38$, are mentioned by Stan Wagon~\cite{Wagon2011} and Richard K. Guy~\cite{Guy2016,Guy2020}.

It is convenient to visualize a barrycade as a rectangular construction made of rectangular bricks of uniform height $1$ and of various widths.
The construction of height $h$ has $h$ rows of bricks, each row has total width $w=\frac{n(n+1)}{2}$, and each row contains a single brick of each integer width from $1$ to $n$.
In each row, the bricks are placed next to each other from left to right in the order given by the corresponding permutation.
A barrycade is breakfree if no two bricks from different rows have left boundaries of the same $x$-coordinate except for the left boundary of the rectangle.

\begin{figure}[ht]
    \centering
    \begin{tikzpicture}[scale=0.5]
        \begin{small}
        \input{img/example_barrycade_4}
        \end{small}
    \end{tikzpicture}
    \caption{A breakfree barrycade of height~\(h=3\) and order~\(n=4\) generates every proper partial sum from $1$ to $9$.}
    \label{fig:barrycade}
\end{figure}
For a small example, let $n = 4$ and observe that the permutation $(1, 3, 2, 4)$ gives proper partial sums $1, 4, 6$, while $(2, 3, 4, 1)$ gives proper partial sums $2, 5, 9$, and $(3, 4, 1, 2)$ gives proper partial sums $3, 7, 8$; see \cref{fig:barrycade} for a visual representation of the construction.
As every proper partial sum given by these permutations is distinct, the construction gives a breakfree barrycade of height~$h=3$ and order~$n=4$.
As every permutation of $3$ elements gives $2$ proper partial sums and there are only $5$ distinct possible proper partial sums, there are no breakfree barrycades of height~$h=3$ and order~$n=3$.

We also consider a cyclic variant of the problem in which the right end of the last brick in each row is identified with the left end of the first brick.
For a formal definition, fix positive integers $h$, $n$, and $w=\frac{n(n+1)}{2}$.
Let $\rho_1,\rho_2,\ldots,\rho_h$ be $h$ permutations of the set $\set{1,2,\ldots,n}$ and $\tau_1,\tau_2,\ldots,\tau_h$ be integers called \emph{shifts}.
For every $1\le i \le h$ and $1 \le j \le n$, we define the \emph{cyclic partial sum} $\sigma_{i,j}$ as the residue modulo $w$ of the $j$-th partial sum of the $i$-th permutation plus $\tau_i$, i.e., $\sigma_{i,j} = \brac{\tau_i + \sum_{k=1}^{j} \rho_i(k)} \bmod w$.
The permutations with shifts form a \emph{breakfree corral} of \emph{order} $n$, \emph{height} $h$, and \emph{width} $w$ if all the cyclic partial sums $\sigma_{i,j}$ for every $1 \le i \le h$ and $1 \le j \le n$ are pairwise distinct.
As each shift and permutation gives $n$ cyclic partial sums, we have $hn \le w = \frac{n(n+1)}{2}$, which is equivalent to $n \ge 2h-1$.
Similarly to the barrycade problem, it is conjectured that for every integer $h \ge 1$, there exists a breakfree corral of height~$h$ and order~$n=2h-1$.

\begin{figure}[ht]
    \centering
    \begin{tikzpicture}[scale=0.5]
        \begin{small}
        \input{img/example_corral_6}
        \end{small}
    \end{tikzpicture}
    \caption{A breakfree corral of height~\(h=3\) and order~\(n=6\) generates every possible cyclic partial sum from $0$ to $20$ except for $4$, $11$, and $18$.}
    \label{fig:corral}
\end{figure}
See \cref{fig:corral} for an example of such a construction, where the permutation $(5,1,3,6,2,4)$ with shift $0$ gives cyclic partial sums $5, 6, 9, 15, 17, 0$, while $(2,4,5,1,3,6)$ with shift $1$ gives cyclic partial sums $3, 7, 12, 13, 16, 1$, and $(6,\allowbreak 2,\allowbreak 4,\allowbreak 5,\allowbreak 1,\allowbreak 3)$ with shift $2$ gives cyclic partial sums $8, 10, 14, 19, 20, 2$.
As all partial sums are distinct, the construction gives a breakfree corral of height~$h=3$ and order~$n=6$.

This work is dedicated to the following two conjectures mentioned earlier.
\begin{conjecture}\label{cnj:corral}
    For all positive integers $h$ and $n\ge 2h-1$, there exists a breakfree corral of height~$h$ and order~$n$.
\end{conjecture}
\begin{conjecture}\label{cnj:barrycade}
    For all positive integers $h\ge2$ and $n\ge 2h-2$, there exists a breakfree barrycade of height~$h$ and order~$n$.
\end{conjecture}

For a barrycade of height~$h$ and of optimal order $n=2h-2$, equality holds for ${w-1=h(n-1)}$.
Thus, a breakfree barrycade of optimal order uses every integer coordinate from $1$ to $w-1$ as a proper partial sum.
Alex Fink, see~\cite{Guy2016,Guy2020}, suggested imposing an additional balancing condition.
Partition the interval of proper partial sums into $n-1$ consecutive sections
\begin{equation*}
    I_j=\set{(j-1)h+1,(j-1)h+2,\ldots,jh},
    \qquad j=1,2,\ldots,n-1.
\end{equation*}
A permutation is \emph{balanced} if exactly one of its proper partial sums belongs to each section $I_j$; a barrycade is \emph{balanced} if each of its permutations is balanced.
Geometrically, if we partition the brick wall into segments of width $h$, then each such segment contains exactly one end of a brick in each row.
This stronger, balanced form of the problem was proposed as a possible route to constructing optimal barrycades.

Tomoki Nakamigawa~\cite{Nakamigawa2015} studied the cyclic version of this balancing condition in the language of well-dispersed partitions of cyclic groups.
For a corral of height~$h$ and of optimal order $n=2h-1$, the width is $w=h(2h-1)$, and the partial cyclic sums in $h$ rows form a partition of the cyclic group $\mathbb{Z}_w$ into $h$ sets whose common cyclic gap multiset is $\set{1,2,\ldots,2h-1}$.
The partition is \emph{well-dispersed} if each of these sets contains exactly one coordinate in every one of the $2h-1$ consecutive blocks of size $h$, which is precisely the cyclic counterpart of the balancing condition.
We say that a corral is \emph{balanced} if the corresponding partition of $\mathbb{Z}_w$ is well-dispersed. 
Tomoki Nakamigawa~\cite[Theorem~5]{Nakamigawa2015} constructed explicit balanced corrals for $h=2,5,6,7$ and showed a procedure that given an optimal balanced corral of height $h$ constructs an optimal balanced corral of height $2h$.
This gives a construction of a balanced breakfree corral of height~$h$ and of optimal order~$n=2h-1$ for every $h=a\cdot 2^b$, where $a\in\set{2,5,6,7}$ and $b\ge0$.
Consequently, his work gives breakfree corrals of optimal order for infinitely many heights, and, together with \cref{thm:corral}, it implies that \cref{cnj:corral} holds for all these heights.

Michał Dębski, Jarosław Grytczuk, Paweł Naroski, Bartłomiej Pawlik, Jakub Przybyło, and Małgorzata Śleszyńska-Nowak~\cite{DebskiGNPPS2026} studied both finite and infinite variants of the barrycade problem.
They proved that there exists a constant $c>0$ such that, for infinitely many positive integers $n$, there is a breakfree barrycade of order~$n$ and height at least $cn$.
They also proposed constructions for the infinite variant.
Their techniques inspired the constructions presented in this paper.

The paper is organized as follows.
In \cref{sec:corral} we present a construction of breakfree corrals that shows the following.
\begin{restatable}{theorem}{thmcorral}\label{thm:corral}
    For all positive integers $h$ and $n\ge 2h$, there exists a breakfree corral of height~$h$ and order~$n$.
\end{restatable}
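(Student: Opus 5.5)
The plan is to build every row from one cyclic brick sequence, so that the rows are translates of a single set of points in $\mathbb{Z}_w$. First, observe that deleting rows of a breakfree corral leaves a breakfree corral. So it suffices to construct, for every order $n\ge 2$, a breakfree corral of height $\floor{n/2}$. Next, fix a permutation $\rho$ of $\set{1,\ldots,n}$ and let $S=\set{s_1,\ldots,s_n}\subseteq\mathbb{Z}_w$ be its set of cyclic partial sums with shift $0$. If row $i$ uses a cyclic rotation of $\rho$ with a suitable shift, its set of cyclic partial sums is exactly $S+t_i$ for any prescribed translate $t_i$ that is a partial sum. This is exactly what happens in the example of \cref{fig:corral}: there $S=\set{0,5,6,9,15,17}$, the rows are $S$, $S+7$ and $S+14$, and $7=n+1=w/h$. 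So the whole problem reduces to finding a set $S$ whose consecutive cyclic gaps are $1,2,\ldots,n$ (in some order) and translates $t_1,\ldots,t_h$ such that the sets $S+t_i$ are pairwise disjoint.

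For even $n=2h$ we have $w=h(n+1)$, and I will take $t_i=(i-1)(n+1)$. The translates $S+k(n+1)$, $0\le k\le h-1$, are pairwise disjoint if and only if the elements of $S$ are pairwise distinct modulo $n+1=2h+1$. Indeed, the multiples of $n+1$ form a subgroup of $\mathbb{Z}_w$ of order $h$, and reduction modulo $n+1$ identifies its cosets. The condition therefore becomes purely arithmetic. We need a permutation $b_1,\ldots,b_n$ of $1,\ldots,n$ whose partial sums $s_1,\ldots,s_n$ are pairwise distinct modulo $2h+1$. Note that $s_n=w\equiv0$ automatically. Equivalently, we need a closed walk $0=s_0,s_1,\ldots,s_n=0$ in $\mathbb{Z}_{2h+1}$ whose $n$ steps are exactly the nonzero residues, each used once, and which visits every nonzero residue except one exactly once. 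The example realizes this for $h=3$ with residues $5,6,2,1,3,0$.

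The main obstacle is to give this walk explicitly for all $h$. I would first try zigzag patterns, with steps of the form $\pm1,\mp2,\pm3,\ldots$ reduced modulo $2h+1$, or the pattern in the example, which visits $-2,-1,2,1,3,0$. Pure alternation ends at $-h$ instead of $0$, so the sequence must be repaired near the end, for instance by swapping two steps or inserting the missing value. Then I would check distinctness by splitting the partial sums into their two alternating arithmetic progressions, and I would verify the construction against small $h$ by computer.

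For odd $n=2h+1$ we have $w=n(h+1)$, and I would use the translates $t_i=(i-1)n$ for $0\le i-1\le h-1$. Here the multiples of $n$ form a subgroup of order $h+1$, but only $h$ of its cosets are used. Hence the requirement is now only that no difference $s_a-s_b$ lies in $\set{\pm n,\pm2n,\ldots,\pm(h-1)n}$ modulo $w$. This is a weaker condition than injectivity modulo $n$, which is in fact impossible since $n\equiv0$. I would aim for a zigzag sequence of the same kind and verify this difference condition directly. If that fails, a fallback is to derive the odd case from the even construction of order $n-1$ by inserting the brick of width $n$ into every row at a carefully chosen position. That route, however, requires re-checking disjointness after inserting the new brick, and it is the case I expect to need the most care.
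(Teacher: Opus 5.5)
There is a genuine gap in each case. The even case is missing its key ingredient, and the odd-case plan fails as stated.

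\textbf{Even case.} Your reduction for $n=2h$ is correct. With $t_i=(i-1)(2h+1)$, the rows are disjoint exactly when the partial sums of $\rho$ are pairwise distinct modulo $2h+1$. The paper's construction in fact has this form: each row of its corral is a rotation of one cyclic brick sequence, and consecutive rows differ by the translate $2h+1$. However, you never produce such a $\rho$ for general $h$, and that existence is the entire content of the case (it amounts to an R-sequencing of $\mathbb{Z}_{2h+1}$). Checking small $h$ by computer does not settle it for all $h$. Your zigzag is also not a near-solution. The alternation $+1,-2,+3,-4,\ldots$ ending at $-h$ uses the bricks $1,2h-1,3,2h-3,\ldots$, so only odd widths occur, each twice. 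No repair near the end turns this into a permutation of $\{1,\ldots,2h\}$.

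What works is the paper's rotating fence read as one sequence:
\begin{itemize}
\item start with the bricks $2h,h$;
\item then the pairs $2h-2i+1,2i-1$ for $i=h,h-1,\ldots,\lceil h/2\rceil+1$;
\item then the pairs $2h-2i+2,2i-2$ for $i=\lceil h/2\rceil,\ldots,2$.
\end{itemize}
Modulo $2h+1$, the first pair sums to $3h\equiv h-1$ and every later pair sums to $2h\equiv -1$. So the sums at the ends of the pairs are $h-1,h-2,\ldots,0$. The sums inside the pairs are $2h$ together with all of $h,\ldots,2h-1$ except $2h-\lceil h/2\rceil$. Hence all $2h$ partial sums are distinct modulo $2h+1$.

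\textbf{Odd case.} With $t_i=(i-1)n$ and $h\ge 2$, rows $1$ and $2$ always collide. The brick of width $n$ produces cyclically consecutive partial sums $s$ and $s+n$ in $S$, so $s+n\in S\cap(S+n)$. Your claim that the requirement is weaker than injectivity modulo $n$ is also false. In $\mathbb{Z}_w$ the element $n$ has order $h+1$, and for $h\ge2$ the multipliers $\pm1,\ldots,\pm(h-1)$ already cover every nonzero residue modulo $h+1$. So the requirement is exactly injectivity modulo $n$, which you correctly note is impossible.

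Your fallback of inserting the brick of width $n$ is the right idea, but it only works at a cut. You need $h$ consecutive positions that carry one partial sum from each row and nothing else, so that everything past the cut moves by $n$ uniformly. This is precisely what the paper's skewed fences and safe concatenation of mono fences provide. For the $\rho$ above with $t_i=(i-1)(2h+1)$, such a cut exists: row $i$ contains the position $i-1$ for $i=1,\ldots,h$. Therefore the mono fences of widths $2h+1,\ldots,n$ can be inserted at positions $0,\ldots,h-1$. This handles every $n>2h$ directly and makes the detour through height $\lfloor n/2\rfloor$ unnecessary.
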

In \cref{sec:barrycade} we present a construction of breakfree barrycades that shows the following.
\begin{restatable}{theorem}{thmbarrycade}\label{thm:barrycade}
    For all positive integers $h$ and $n\ge 2h+3$, there exists a breakfree barrycade of height~$h$ and order~$n$.
\end{restatable}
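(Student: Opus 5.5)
The plan is to give an explicit construction, valid uniformly for every $n\ge 2h+3$, that relies on pairing bricks into complementary pairs $\set{a,\,n+1-a}$. Monotonicity in $n$ is not free: a barrycade of order $n$ does not obviously give one of order $n+1$. For the same reason I would not try to derive the result from \cref{thm:corral} by cutting a corral, because cutting a cyclic wall at a point splits one brick in every row.

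\textbf{Shape of each row.} Row $i$, for $1\le i\le h$, starts with brick $i$, so the first proper partial sums are $1,2,\ldots,h$ and are pairwise distinct. Most of the remaining bricks are then laid as consecutive complementary pairs $(a_k,\,n+1-a_k)$. After the $k$-th pair, the partial sum is $i+k(n+1)$. Think of the wall as levels of width $n+1$ and track partial sums modulo $n+1$.
\begin{itemize}
\item The ``even'' partial sums, taken after completing a pair, lie in residue class $i$. They are therefore automatically distinct across rows, and within a row they are distinct because $k$ differs.
\item The ``odd'' partial sums, taken inside a pair, equal $i+k(n+1)+a_k$.
\end{itemize}
The odd sums are kept off the even ones by forcing their residues $r_{i,k}\equiv i+a_k \pmod{n+1}$ to lie in $\set{h+1,\ldots,n}$. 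There are $n-h\ge h+3$ such residues. Two odd sums in the same level $k$ must also avoid each other, so for each $k$ the values $r_{i,k}$, $i=1,\ldots,h$, must be distinct. I would take a cyclic Latin-rectangle choice, for example $r_{i,k}=h+1+\bigl((c_k+i)\bmod (n-h)\bigr)$ with a suitable level offset $c_k$. I would also check that $i+a_k$ does not wrap past $n+1$ in a way that moves a sum into the neighbouring level.

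\textbf{Main obstacle.} The hard step is bookkeeping: each row must use every brick exactly once. The pair containing $i$ is broken, because $i$ was used at the start, so $n+1-i$ is left unpaired. Moreover, $a_k=r_{i,k}-i$ is dictated by the Latin-rectangle choice, and we need that as $k$ varies these values hit each remaining complementary pair exactly once. Parity issues also arise, since when $n$ is even the middle value $(n+1)/2$ does not exist and every brick has a partner, while when $n$ is odd brick $(n+1)/2$ is self-complementary.

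I would try to resolve this by reversing the order of choice:
\begin{enumerate}
\item Choose the order of pairs in row $i$ as a cyclic shift of a fixed ordering of the pairs.
\item Choose, within each pair, which element comes first so that the residue $i+a_k$ avoids $\set{1,\ldots,h}$ and collides with no other row.
\end{enumerate}
Each pair offers two orientations, with residues $i+a$ and $i-a+n+1\equiv i-a$. This binary choice per pair should give enough freedom to separate the rows.

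\textbf{Using the slack.} The unpaired brick $n+1-i$, the self-complementary brick when $n$ is odd, and the final brick, whose partial sum is not counted, are placed at the end of the row. The three extra units of slack in $n\ge 2h+3$ (compared with $2h$ for corrals) are what I expect to absorb these end effects. They give a few spare residues and a short tail region near $w$ in which the last one or two partial sums of each row can be placed distinctly, for instance in reverse order of $i$.

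Finally, the construction must be checked separately for $n$ even and $n$ odd, and for small $h$, where $n-h$ only barely exceeds $h$.
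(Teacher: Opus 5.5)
\textbf{Gap: the orientation choice cannot avoid collisions.} Your claim that the binary orientation choice per pair gives enough freedom fails. In your scheme the completed-pair sums $j+k(n+1)$, $1\le j\le h$, are present at every level. So every intermediate sum of row $i$ must avoid both neighbouring bands of width $h$.

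Take the pair $\set{1,n}$ in any row $2\le i\le h-1$. It is a full pair there, since only $\set{i,n+1-i}$ is broken. Suppose it is the $k$-th pair.
\begin{itemize}
\item If $1$ comes first, the intermediate sum is $(i+1)+(k-1)(n+1)$. This is the partial sum of row $i+1$ after $k-1$ pairs.
\item If $n$ comes first, the intermediate sum is $(i-1)+k(n+1)$. This is the partial sum of row $i-1$ after $k$ pairs, and row $i-1$ has the same number of pairs as row $i$.
\end{itemize}
Both orientations collide, whatever level $k$, pair ordering, or Latin-rectangle offsets you choose. More generally, in row $i$ every pair $\set{a,n+1-a}$ with $a<i$ and $a+i\le h$ is unplaceable for the same reason: the two orientations collide with rows $i+a$ and $i-a$. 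Row $i$ therefore has $\min(i-1,h-i)$ such pairs, about $h/2$ for the middle rows. The scheme already breaks for $h=3$, $n=9$, row $2$.

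\textbf{Consequence.} These $2\min(i-1,h-i)$ bricks must leave the pair structure, so the rows end up with different numbers of levels. The ``tail'' is then no longer a constant-size end effect that three units of slack can absorb. It becomes a subproblem of size $\Theta(h)$ that must be threaded through levels still occupied by other rows, which is essentially the original problem again. You also leave unresolved the requirement that each row meets every remaining pair exactly once while residues stay distinct within each level. So the proposal is a plan rather than a proof, and it needs a new idea.

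\textbf{How the paper avoids this.} The paper never pairs a small brick with a large one.
\begin{itemize}
\item It pairs bricks summing to $2h+3$ in a fixed skewed base fence whose intermediate sums lie strictly between the two boundary bands.
\item It concatenates $h$ translated copies of the resulting rotating fence, with a rotation after each. Every block is then a translate of one breakfree fence, safe concatenation handles all cross-block conflicts, and the extra brick $h$ passes through every row exactly once.
\item It treats width $h+3$ and all widths above $2h+3$ as single-brick mono fences.
\item It spends the slack on merging the pair $(i-1)+(2h-i+4)$ into one brick $2h+3$, then re-inserting those two bricks at the two ends of row $i$ to align the boundaries.
\end{itemize}
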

In \cref{sec:sa} we present a randomized heuristic that allows us to find breakfree barrycades and corrals of conjectured optimal orders for all heights up to $50$, which, together with \cref{thm:corral}, confirms \cref{cnj:corral} for all heights up to $50$.
Using the same techniques, we extend Tomoki Nakamigawa's result and give explicit constructions of balanced corrals for $h=9,11,13,15,17,18,19$.
This supports a stronger version of \cref{cnj:corral} given by Tomoki Nakamigawa~\cite[Problem~2]{Nakamigawa2015}, that says that for every $h \neq 3$ there is a balanced breakfree corral of height~$h$ and order~$n=2h-1$.
The constructed barrycades and corrals are available in a public repository~\cite{repo} together with a verification program that checks if a given barrycade or corral is breakfree.

\subsection{Notation}\label{sec:notation}

\subsubsection*{Fragmentary solutions}
In order to present our constructions, we need to describe fragmentary solutions that we call \emph{fences}.
A fence fixes the shifts and some prefixes of the permutations.
Assuming that values of height~$h$, order~$n$, and width $w=\frac{n(n+1)}{2}$ are fixed, we can describe a fragmentary solution by a sequence of $h$ descriptions, one for each row.
We use the following notation
\begin{equation*}
    i:{}^t[x_1][x_2]\ldots[x_j]{}^u
\end{equation*}
with $x_1,x_2,\ldots,x_j$ being distinct integers from the set $\set{1,2,\ldots,n}$,
to denote that the fence fixes shift $\tau_i=t$ and the first $j$ values of the partial permutation $\rho_i$ to be $\rho_i(1)=x_1$, $\rho_i(2)=x_2$, $\ldots$, $\rho_i(j)=x_j$.
Additionally, we have $t + x_1+x_2+\ldots+x_j=u$, and we use the notation $\upsilon_i = u$ to denote this partial sum generated by the fence.

In the brick representation, this means that in the $i$-th row, the bricks of widths $x_1$, $x_2$, up to $x_j$ are located next to each other from left to right, starting at $x$-coordinate $t$ and ending at $x$-coordinate $u$.
A fence with $j$ bricks $x_1, x_2, \ldots, x_j$ in the $i$-th row generates $j+1$ partial sums: $\tau_i$, $\tau_i+x_1$, $\tau_i+x_1+x_2$, $\ldots$, $\tau_i+x_1+x_2+\ldots+x_j = \upsilon_i$.
We call the sequence $(\tau_1,\tau_2,\ldots,\tau_h)$ to be the \emph{left boundary} of the fence, and the sequence $(\upsilon_1,\upsilon_2,\ldots,\upsilon_h)$ to be the \emph{right boundary}.
We say that a fence is \emph{breakfree} if all the partial sums defined by the fence are pairwise distinct.
We will always make sure that the constructed fences are breakfree, so that we can extend it to a complete breakfree corral or barrycade.
We say that a fence is \emph{left-skewed} if the shifts satisfy
$\tau_{i+1} = \tau_i + 1$ for every $i=1,2,\ldots,h-1$.
It is \emph{right-skewed} if we have $\upsilon_{i+1} = \upsilon_i + 1$ for every $i=1,2,\ldots,h-1$.
It is \emph{skewed} if it is both left-skewed and right-skewed.

\subsubsection*{Mono fence}
\begin{figure}[ht]
    \centering
    \begin{tikzpicture}[scale=0.33]
    \begin{tiny}
    \input{img/fence_mono_7}
    \end{tiny}
    \end{tikzpicture}
    \caption{The $8$-mono fence of height $7$.}
    \label{fig:mono_fence}
\end{figure}
For a fixed value of height $h$, and any integer $k \ge h$, let the \emph{$k$-mono fence} be a fence given by the following description (see \cref{fig:mono_fence} for a visual representation).
\begin{gather*}
    i:{}^i[k]{}^{k+i}\quad\text{; for } i=1,2,\ldots,h\text{.}
\end{gather*}
It is easy to see that the left boundary of the $k$-mono fence is $1,2,\ldots,h$ and the right boundary is $k+1,k+2,\ldots,k+h$.
There are no other partial sums generated by the $k$-mono fence, and as $k+1>h$, the $k$-mono fence is breakfree and skewed.

\subsubsection*{Operations}

There are some natural operations that can be applied to fences to obtain new fences.
Given a fence with shifts $\tau_1,\tau_2,\ldots,\tau_h$ and partial permutations $\rho_1,\rho_2,\ldots,\rho_h$, we define the following operations.
\subparagraph*{Shift}
Given an integer $s$, we can \emph{shift} a fence by adding $s$ to each shift $\tau_i$.
It is clear that if the original fence is breakfree, then the shifted fence is also breakfree.
\subparagraph*{Rotation}
We can \emph{rotate} a fence by moving shifts and partial permutations down by one row cyclically, i.e., the first row becomes the last row, and every other row moves down by one.
It is clear that if the original fence is breakfree, then the rotated fence is also breakfree.
\subparagraph*{Concatenation}
We can \emph{concatenate} two fences of the same height by placing the second one to the right of the first one, so that the last brick of each row in the first fence is adjacent to the first brick of the corresponding row in the second fence.
When doing so, we have to make sure that:
\begin{itemize}
    \item the right boundary of the first fence matches the left boundary of the second fence,
    \item in each row, the two fences use bricks of distinct widths,
    \item all the partial sums in the resulting fence are pairwise distinct.
\end{itemize}
\subparagraph*{Safe concatenation}
We observe that if $F_1$ is a breakfree, right-skewed fence, and $F_2$ is a breakfree, left-skewed fence, then there exists a unique integer $s$ such that right boundary of $F_1$ matches the left boundary of $F_2$ shifted by $s$.
Let $F$ be a fence resulting from concatenation of $F_1$ and $F_2$ shifted by $s$.
As right boundary of $F_1$ is a sequence of $h$ consecutive integers, and $F_1$ is breakfree, we get that every other partial sum generated by $F_1$ is a smaller number.
As left boundary of $F_2$ shifted by $s$ is the same sequence of $h$ consecutive integers we get that every other partial sum generated by $F_2$ shifted by $s$ is a larger number.
We conclude that $F$ is breakfree.
Thus, in order to safely concatenate a breakfree, right-skewed fence with a breakfree, left-skewed fence and obtain a new breakfree fence it is enough to make sure that the two fences use bricks of distinct widths in each row.

\section{Corrals}\label{sec:corral}
In this section we present a construction of breakfree corrals that proves \cref{thm:corral}.
As the construction for $h=1$ is trivial, we fix some positive integer $h \ge 2$ and we will construct a breakfree corral of order~$n=2h$ and height~$h$.

\subsubsection*{Base fence}
Consider the fence of height~$h$ and order~$n=2h$ described by the following description (see \cref{fig:corral_base_fence} for a visual representation).
\begin{gather*}
    1:{}^1[2h]^{2h+1}\text{,}\\
    i:{}^i[2h-2i+2][2i-2]^{2h+i}\quad\text{; for } i=2,\ldots,\ceil{\frac{h}{2}}\text{,}\\
    i:{}^i[2h-2i+1][2i-1]^{2h+i}\quad\text{; for } i=\ceil{\frac{h}{2}}+1,\ldots,h\text{.}
\end{gather*}
\begin{figure}[ht]
    \begin{subfigure}{0.5\textwidth}
    \centering
    \begin{tikzpicture}[scale=0.225]
    \begin{tiny}
    \input{img/construction_corral_base_16}
    \end{tiny}
    \end{tikzpicture}
    \caption{The base fence for $h=8$.}
    \label{fig:corral_base_fence_even}
    \end{subfigure}%
    \begin{subfigure}{0.5\textwidth}
    \centering
    \begin{tikzpicture}[scale=0.25]
    \begin{tiny}
    \input{img/construction_corral_base_14}
    \end{tiny}
    \end{tikzpicture}
    \caption{The base fence for $h=7$.}
    \label{fig:corral_base_fence_odd}
    \end{subfigure}
    \caption{Base fences for even and odd $h$.}
    \label{fig:corral_base_fence}
\end{figure}

We call this fence the \emph{base fence}.
Observe the following properties of the base fence:
\begin{itemize}
    \item Each row in the base fence contains bricks of total width \(2h\).
    One brick of width \(2h\) is used in the first row, and two bricks of total width $2h$ are used in every other row.
    \item The base fence uses $2h-1$ bricks of distinct widths, i.e., one brick of every width from $1$ to $2h$ except for the brick of width $h$ that is missing.
    \item The base fence is skewed.
    \item The base fence is breakfree. To see that, observe that the partial sums generated by the base fence are: $1,\ldots,h$ on the left boundary, $2h+1,\ldots,3h$ on the right boundary, and $h-1$ distinct values between $h+1$ and $2h$ except for the value $h+1+\floor{\frac{h}{2}}$.
\end{itemize}

\subsubsection*{Rotating fence}
\begin{figure}[ht]
    \centering
    \begin{tikzpicture}[scale=0.33]
    \begin{tiny}
    \input{img/construction_corral_rotating_16}
    \end{tiny}
    \end{tikzpicture}
    \caption{The rotating fence for $h=8$.}
    \label{fig:corral_rotating_fence}
\end{figure}
Consider the fence given by the base fence with brick of width $h$ added to the end of the first row (just after the brick of width $2h$).
Such a fence is still breakfree, and it uses all the bricks of widths $1$ to $2h$ -- each once.
We call this fence the \emph{rotating fence} (see \cref{fig:corral_rotating_fence} for a visual representation).
The rotating fence is left-skewed, but not right-skewed.
Further, if we rotate the rotating fence, then the resulting fence is right-skewed, but no longer left-skewed.

\subsubsection*{Final construction}
We are now ready to construct a breakfree corral of height~$h$ and order~$n=2h$ and prove \cref{thm:corral}.

\thmcorral*
\begin{proof}
\begin{figure}[ht]
    \centering
    \begin{tikzpicture}[scale=0.315]
    \begin{tiny}
    \input{img/construction_corral_8}
    \end{tiny}
    \end{tikzpicture}
    \caption{A breakfree corral of order~\(n=8\) and height~\(h=4\).}
    \label{fig:corral_final}
\end{figure}
    The theorem trivially holds for $h=1$.
    For a fixed $h \ge 2$, we first prove the theorem for $n=2h$.
    We start the construction with the rotating fence.
    We rotate the rotating fence and we observe that the result is right-skewed.
    Then we apply the following procedure $h-1$ times, keeping the result right-skewed after each iteration.
    For a single repetition, we perform a safe concatenation of the current right-skewed construction with a new left-skewed copy of the rotating fence, and a single rotation of the result.
    It is easy to see that the result is again right-skewed.

    For every $i=0,1,\ldots,h-1$, let $F_i$ denote the construction after $i$ repetitions of the procedure.
    We show that, for $i=0,1,\ldots,h-1$, every row in $F_i$ is a concatenation of $i+1$ cyclically consecutive rows in the rotating fence.
    This is clearly true for $i=0$, as $F_0$ is the rotated rotating fence.
    As in each step $i$, we make a single rotation after a safe concatenation with a new copy of the rotating fence, we get that each row that previously was a concatenation of $i$ cyclically consecutive rows is extended by the exact next row and is now a concatenation of $i+1$ cyclically consecutive rows.
    As the rotating fence uses each brick of width from $1$ to $2h$ exactly once,
    we get that each brick in the $j$-th row of $F_i$ is distinct.
    Therefore, we really can perform the safe concatenation operation.

    After $h-1$ repetitions of the procedure, we have a total of $h$ copies of the rotating fence in the construction.
    We have performed $h$ rotations, so the resulting construction is again left-skewed and right-skewed.
    As each row in the result contains a different row in the rotating fence in each copy of the rotating fence, each row contains a single brick of every width from $1$ to $2h$.
    As we only perform rotations and safe concatenations, it is clear that the resulting fence is breakfree.
    Thus, we have constructed a breakfree, skewed fence of height~$h$ and order~$n=2h$ that uses each brick of width from $1$ to $2h$ exactly once in each row.
    The left boundary of the resulting fence is $1,2,\ldots,h$ and the right boundary is $w+1,w+2,\ldots,w+h$.
    Therefore, we can wrap the resulting fence into a corral by using the exact permutations and shifts defined by the fence.
    When doing so, we merge the left boundary with the right boundary and do not create any conflicts among other bricks.
    Indeed, the left boundary already occupies every integer in $\set{1,2,\ldots,h}$ and the right boundary occupies every integer in $\set{w+1,w+2,\ldots,w+h}$, so, as the fence is breakfree, no other partial sum lies in either of these two intervals.
    As all the partial sums are integers between $1$ and $w+h$, the reduction modulo~$w$ identifies exactly $i$ with $w+i$ for $i=1,2,\ldots,h$.
    We conclude that we have constructed a breakfree corral of height~$h$ and order~$n=2h$ (see \cref{fig:corral_final} for a visual representation of the construction for $h=4$).

    Now, for order $n>2h$, let the \emph{padding fence} be a fence constructed by safe concatenations of $(2h+1)$-mono fence, $(2h+2)$-mono fence, \ldots, and $n$-mono fence.
    The padding fence is a breakfree skewed fence.
    We can construct a breakfree corral of height~$h$ and order~$n$ by wrapping a safe concatenation of the padding fence with the fence constructed for order $n=2h$.
\end{proof}

An implementation of the construction used in the proof of \cref{thm:corral}, together with instructions for running the construction and verification programs, is available in the repository~\cite{repo}.

\section{Barrycades}\label{sec:barrycade}
The construction of breakfree barrycades is similar to the construction of breakfree corrals.
We fix some positive integer $h\ge2$ and construct a breakfree barrycade of height~$h$ and order~$n=2h+3$.
First, we construct a fence of height~$h$ and order~$n=2h+2$ in a manner similar to the construction used in the proof of \cref{thm:corral}.
Then, we show how to modify the constructed fence in order to obtain a breakfree barrycade.

\subsubsection*{Base fence}
First consider the following fence of height~$h+1$:
\begin{equation*}
    i:{}^i[2h-2i+4][2i-1]^{i+2h+3} \quad\text{; for } i=1,\ldots,h+1
\end{equation*}
and remove the row that contains the bricks of widths $h$ and $h+3$.
Finally, we reindex the remaining rows from $1$ to $h$, and set the shift $\tau_i=i$ for every row $i=1,2,\ldots,h$.
This gives us a fence of height~$h$ that we call the \emph{base fence}.
\begin{figure}[ht]
    \centering
    \begin{tikzpicture}[scale=0.33]
    \begin{tiny}
    \input{img/construction_barrycade_base_19}
    \end{tiny}
    \end{tikzpicture}
    \caption{The base fence for $h=8$.}
    \label{fig:barrycade_base_fence}
\end{figure}

Observe the following properties of the base fence:
\begin{itemize}
    \item Each row in the base fence contains bricks of total width \(2h+3\).
    \item The base fence uses $2h$ bricks of distinct widths, i.e., one brick of every width from $1$ to $2h+2$ except for the bricks of widths $h$ and $h+3$ that are missing.
    \item The base fence is skewed.
    \item The base fence is breakfree. All the partial sums generated by the base fence are: $1,\ldots,h$ on the left boundary, $2h+4,\ldots,3h+3$ on the right boundary, and $h$ distinct values between $h+2$ and $2h+3$.
\end{itemize}

\subsubsection*{Rotating fence}
As in the previous construction, we can add the missing brick of width $h$ to the first row in the base fence and obtain a rotating fence.
As in the previous construction, the rotating fence is left-skewed, but not right-skewed and the rotated rotating fence is right-skewed, but not left-skewed.

\subsubsection*{Final construction}
As in the previous construction, we can start the construction with a rotated copy of the rotating fence and perform $h-1$ iterations of a safe concatenation with a copy of the rotating fence and a rotation.
This way we obtain a breakfree skewed fence of height~$h$ and order~$n=2h+2$.
In the resulting construction, each row misses exactly one brick of width $h+3$.
We can add the missing bricks by a safe concatenation of the $(h+3)$-mono fence to obtain a breakfree, skewed fence of height~$h$ and order~$n=2h+2$ that uses each brick of width from $1$ to $2h+2$ exactly once in each row.

We are now ready to construct a breakfree barrycade of order~$n=2h+3$ and height~$h$, proving \cref{thm:barrycade}.

\thmbarrycade*
\begin{proof}
\begin{figure}[ht]
    \centering
    \begin{tikzpicture}[scale=0.255]
    \begin{tiny}
    \input{img/construction_barrycade_9}
    \end{tiny}
    \end{tikzpicture}
    \caption{A breakfree barrycade of order~\(n=9\) and height~\(h=3\).}
    \label{fig:barrycade_final}
\end{figure}
    The theorem trivially holds for $h=1$.
    For a fixed $h\ge2$, we start with the constructed breakfree fence of height~$h$ and order~$2h+2$.
    We observe that in each row $i=2,3,\ldots,h$, the brick of width $i-1$ is in one of the copies of the rotating fence next to the brick of width $2h-i+4$, as $(i-1)+(2h-i+4)=2h+3$ and $i-1\le h-1$, so these two bricks form a row of the base fence other than the removed one.
    We replace these two bricks with a single brick of width $2h+3$.
    Clearly, we have only removed one partial sum from each row except the first one, and the result is still a breakfree fence.
    This way we obtain a breakfree skewed fence of height~$h$ and order~$2h+3$, that misses the brick of width $2h+3$ in the first row, and two bricks of widths $i-1$ and $2h-i+4$ in every row $i=2,3,\ldots,h$.

    If $n>2h+3$, we construct the \emph{padding fence} as a safe concatenation of $(2h+4)$-mono fence, $(2h+5)$-mono fence, \ldots, and $n$-mono fence.
    The padding fence is a breakfree skewed fence.
    We perform a safe concatenation of the padding fence with the constructed breakfree fence of height~$h$ and order~$2h+3$.

    We have now constructed a breakfree fence of height~$h$ and order~$n$ that misses the brick of width $2h+3$ in the first row, and two bricks of widths $i-1$ and $2h-i+4$ in every row $i=2,3,\ldots,h$.
    We finish the construction by adding the missing bricks to each row.
    We add the missing brick of width $2h+3$ to the first row at the end.
    In every other row $i=2,3,\ldots,h$, we add the missing brick of width $i-1$ at the beginning of the row, and the brick of width $2h-i+4$ at the end of the row.
    This way, we have altered the left boundary and the right boundary of the construction from skewed to aligned.
    The final result is a breakfree barrycade of order~$n$ and height~$h$.
\end{proof}

An implementation of the construction used in the proof of \cref{thm:barrycade}, together with instructions for running the construction and verification programs, is available in the repository~\cite{repo}.

\section{Simulated Annealing}\label{sec:sa}
To search for breakfree barrycades and corrals of conjectured optimal orders, we used standard simulated annealing methods~\cite{KirkpatrickGV1983,AartsKM2005}.
For fixed height~$h$ and order~$n$, a state consists of $h$ permutations of $\set{1,2,\ldots,n}$.
For corrals, the shifts were fixed at $\tau_i=i-1$ for $i=1,2,\ldots,h$ throughout the search.
The objective is to eliminate collisions between partial sums of different rows: proper partial sums for barrycades and cyclic partial sums for corrals.
A natural objective function counts pairs of equal partial sums, so its value is zero exactly when the state represents a breakfree construction.
For heights~$h\le20$, we additionally required the constructions to be balanced and modified the scoring function to penalize violations of the balancing conditions as well as collisions between partial sums.
The only exception is the corral of height~$h=3$, as Tomoki Nakamigawa~\cite{Nakamigawa2015} showed that there is no balanced corral of height~$3$ and order~$5$.

The basic state moves exchange two bricks within a single row or reverse the order of a contiguous segment of a row's permutation.
Both moves preserve the requirement that each row is a permutation and allow the search to modify brick boundaries while preserving the height, order, and width of the construction.
In the standard annealing scheme, a proposed move that does not increase the objective is accepted, whereas a move that increases it by~$\Delta>0$ is accepted with probability~$\exp(-\Delta/T)$, where~$T>0$ is the current temperature.
The temperature is gradually decreased, allowing the search to escape local minima early in a run and favouring improvements as the run progresses.

We ran several variants and implementations of the heuristic, adjusting the parameters across runs until we obtained all the constructions reported in this paper.
The resulting constructions are available in the repository~\cite{repo}, including barrycades and corrals of conjectured optimal orders for heights up to~$50$.
Each saved construction can be checked independently using the provided verification program.
Thus, the heuristic is used only to find candidates; the breakfree property of each reported construction is verified directly from its permutations and, for corrals, its shifts.
Instructions for running the verification programs can be found in the repository~\cite{repo}.
The verification checks cover the implemented theorem constructions for heights up to~$50$ and verify the height, conjectured optimal order, and breakfree property of the saved heuristic constructions in the same range.
They also check balance for heights up to~$20$, except for the corral of height~$3$, which cannot be balanced.

\section{Final Remarks}\label{sec:final}
We have shown that for every positive integer $h$, there exists a breakfree barrycade of height~$h$ and order~$2h+3$ and a breakfree corral of height~$h$ and order~$2h$.
These bounds are close to the respective conjectured optimal orders of $2h-2$ and $2h-1$.
We have also confirmed the existence of barrycades and corrals of conjectured optimal orders for every height $h \le 50$.
Additionally, there are balanced barrycades of optimal order for every height $2 \le h \le 20$, and balanced corrals of optimal order for every height $h \le 20$ other than $h=3$, for which no balanced corral exists~\cite{Nakamigawa2015}.
These small constructions support stronger variants of the conjectures.

The corral conjecture is known to hold for infinitely many heights by Tomoki Nakamigawa's construction~\cite{Nakamigawa2015}.
For barrycades, it remains unknown whether the conjectured optimal order can be attained for infinitely many heights.
Resolving both conjectures remains a very intriguing question.

\bibliography{barrycades}

\end{document}